\documentclass[11pt]{amsart}   
\usepackage{amssymb,amscd,latexsym}   
\usepackage{amsmath}
\usepackage{amsthm,times}
\usepackage[margin=1in]{geometry}
\usepackage{tikz}
\usepackage{epsfig,graphicx}
\usepackage{setspace}
\usepackage{nicematrix}

\begin{document}

\newcommand{\mmbox}[1]{\mbox{${#1}$}}
\newcommand{\N}{{\mathbb N}}
\newcommand{\Q}{{\mathbb Q}}
\newcommand{\R}{{\mathbb R}}
\newcommand{\A}{{\mathcal{A}}}
\newcommand{\msm}{\mathop{\rm msm}\nolimits}
\newcommand{\diag}{\mathop{\rm diag}\nolimits}
\newcommand{\1}{\mathop{\bf 1}\nolimits}
\newcommand{\bmat}[1]{\setstretch{1}{\begin{bmatrix}#1\end{bmatrix}}}
\sloppy
\newtheorem{defn0}{Definition}[section]
\newtheorem{prop0}[defn0]{Proposition}
\newtheorem{quest0}[defn0]{Question}
\newtheorem{thm0}[defn0]{Theorem}
\newtheorem{lem0}[defn0]{Lemma}
\newtheorem{corollary0}[defn0]{Corollary}
\newtheorem{example0}[defn0]{Example}
\newtheorem{remark0}[defn0]{Remark}
\newtheorem{prob0}[defn0]{Problem}
\newtheorem{conj0}[defn0]{Conjecture}
\newtheorem{alg0}[defn0]{Algorithm}

\newenvironment{defn}{\begin{defn0}}{\end{defn0}}
\newenvironment{prop}{\begin{prop0}}{\end{prop0}}
\newenvironment{quest}{\begin{quest0}}{\end{quest0}}
\newenvironment{thm}{\begin{thm0}}{\end{thm0}}
\newenvironment{lem}{\begin{lem0}}{\end{lem0}}
\newenvironment{cor}{\begin{corollary0}}{\end{corollary0}}
\newenvironment{exm}{\begin{example0}\rm}{\end{example0}}
\newenvironment{rem}{\begin{remark0}\rm}{\end{remark0}}
\newenvironment{prob}{\begin{prob0}\rm}{\end{prob0}}
\newenvironment{conj}{\begin{conj0}}{\end{conj0}}
\newenvironment{alg}{\begin{alg0}}{\end{alg0}}

\newcommand{\defref}[1]{Definition~\ref{#1}}
\newcommand{\propref}[1]{Proposition~\ref{#1}}
\newcommand{\thmref}[1]{Theorem~\ref{#1}}
\newcommand{\lemref}[1]{Lemma~\ref{#1}}
\newcommand{\corref}[1]{Corollary~\ref{#1}}
\newcommand{\exref}[1]{Example~\ref{#1}}
\newcommand{\secref}[1]{Section~\ref{#1}}
\newcommand{\remref}[1]{Remark~\ref{#1}}
\newcommand{\questref}[1]{Question~\ref{#1}}
\newcommand{\probref}[1]{Problem~\ref{#1}}
\newcommand{\conjref}[1]{Conjecture~\ref{#1}}
\newcommand{\algref}[1]{Algorithm~\ref{#1}}

\title{Primary decompositions via exponent matrices}
\author{Jacob Miller}

\subjclass[2020]{Primary: 13C05 ; Secondary: 13F55.} \keywords{primary decomposition, associated prime, monomial ideal, exponent vector.\\
\indent Author's address: Department of Mathematics and Statistical Science, University of Idaho, Moscow, ID 83844, jaco5247@vandals.uidaho.edu.}

\begin{abstract} In this paper we present a new result that exhibits in a non-recursive fashion a primary decomposition of any monomial ideal. Our result is derived from the classical quotient-sum recursive procedure for determining a primary decomposition of any proper ideal in a commutative Noetherian ring. We also show how our result can be used as an algorithm for producing a primary decomposition directly from the exponent vectors of the minimal generators of the ideal.
\end{abstract}

\date{}
\maketitle

\section{Introduction}

An important result by Lasker (1905) and Noether (1921) says that every proper ideal $I$ in a Noetherian commutative ring $R$ can be written as a finite intersection of primary ideals; this decomposition is called a primary decomposition of $I$. Primary ideals are generalization of irreducible ideals, and Noether proved the above theorem by showing that any proper ideal in a Noetherian ring is the intersection of finitely many irreducible ideals.

The importance of primary decompositions is enormous as almost all quintessential algebraic invariants appearing in Commutative Algebra and Algebraic Geometry are defined from this: (Krull) dimension and heights (codimensions) of ideals or varieties, degree of an ideal or of a variety, and even information about the free resolutions are captured in this concept (vanishing of ${\rm Ext}$ modules in strong correlation to the non-existence of certain associated primes of the corresponding codimensions; see \cite{EiHuVa}), etc.

In general, computing a primary decomposition of an ideal is not straightforward. However, for monomial ideals in polynomial rings, a handful of algorithms exist, and we briefly review these in the next section. Our main result (Theorem \ref{thm-exponent}) leads to an algorithm (Algorithm \ref{alg-exp}) which produces a primary decomposition of a monomial ideal in a non-recursive way, directly from the generators of the ideal, via the corresponding exponent matrix. At the end of the paper, we include a Macaulay2 implementation of the algorithm. 

Note that in \cite[Theorem 3.7 and Section 4]{MiNaTo} the exponent matrices are used to characterize witnesses of monomial ideals and their powers - {\em a witness} of an ideal $I\subset R$ is any element $f\in R$ such that $(I:f)$ is an associated prime of $I$. Their approach is rather theoretical than computational, as the Macaulay2 code presented in their Appendix is just a word-by-word (slow) translation of their Theorem 3.7. In a future project, we plan on investigating if our Theorem \ref{thm-exponent} could produce a more effective (and different) way of finding witnesses of monomial ideals and of any of their powers. 

\section{Preliminaries}

An ideal $Q$ of a commutative ring $R$  is called {\em primary} if $Q$ is proper and, whenever $a,b\in R$ with $ab\in Q$ but $a\notin Q$, then $b\in \sqrt{Q}$. If $Q$ is primary, then $P:=\sqrt{Q}$ is a prime ideal of $R$, and we say that {\em $Q$ is a $P$-primary ideal}. Also, whenever $P'$ is a prime ideal with $Q\subseteq P'$, then $P\subseteq P'$.

Let $R$ be a commutative Noetherian ring and let $I$ be a proper ideal of $R$. Then, $I$ has a {\em primary decomposition} $I=Q_1\cap\cdots\cap Q_n$ (see, \cite[Corollary 4.35]{Sh}), meaning that for all $i=1,\ldots,n$, $Q_i$ is a $P_i$-primary ideal. If, in addition, $P_1,\ldots, P_n$ are all distinct and for all $j=1,\ldots,n$ we have $$Q_j\nsupseteq (Q_1\cap\cdots\cap Q_{j-1}\cap Q_{j+1}\cap\cdots\cap Q_n),$$ then the primary decomposition is called {\em minimal} or {\em irredundant}.

In a minimal primary decomposition as above, the prime ideals $P_i$ are unique, and they are called {\em the associated primes of $I$}; this is denoted by ${\rm Ass}(R/I)=\{P_1,\ldots,P_n\}$. The associated primes that are minimal under inclusion are called the minimal primes of $I$ (see \cite[Proposition 4.24]{Sh}); their set is denoted ${\rm Min}(I)$.

Throughout, we define $R := \mathbb{K}[x_1,\ldots,x_n]$, a polynomial ring over a field $\mathbb{K}$, and $I$ denotes a monomial ideal of $R$ with minimal generating set $G(I)$. It is well known that such a ring $R$ is Noetherian (by Hilbert's Basis Theorem), and that every proper ideal in a Noetherian ring has a primary decomposition (see \cite[Corollary 4.35]{Sh}). A monomial ideal in $R$ is one that is (finitely) generated by monomials in the indeterminates $x_1,\ldots,x_n$. For this special case, several algorithms exist to determine a primary decomposition from the generators of the monomial ideal. We briefly review them as an introduction to our method using exponent matrices. 

\subsection{Irreducible decompositions}

An ideal $I$ of a commutative ring $R$ is called {\em irreducible} if it is a proper ideal and, whenever $I=I_1\cap I_2$ with $I_1, I_2$ ideals of $R$, then $I=I_1$ or $I=I_2$. By \cite[Proposition 4.34]{Sh}, in a commutative Noetherian ring, an irreducible ideal is primary. The converse is not true: for example, in $\mathbb K[x,y]$, the ideal $\langle x^2,y\rangle\cap \langle x,y^2\rangle$ is primary, but not irreducible.

By \cite[Proposition 4.33]{Sh}, every proper ideal in a commutative Noetherian ring can be expressed as a finite intersection of irreducible ideals; because such ideals are primary, this is a primary decomposition. An irreducible decomposition is called {\em minimal} if no factors in the decomposition can be removed to obtain an irreducible decomposition of the same ideal. By \cite[Exercise 7.19]{AtMa}, any minimal irreducible decomposition is unique up to permuting its irreducible factors.

\subsubsection{Splitting algorithm}

Any ideal generated by powers of variables is irreducible, so we may systematically decompose a monomial ideal into an intersection of irreducible ideals as follows (see \cite[Lemma 5.18]{MiSt}): Let $m \in G(I)$. If $m=fg$, where $f$ and $g$ are relatively prime monomials, then
\[
    I = (I+\langle f \rangle) \cap (I+\langle g \rangle ).
\]
Repeating this decomposition on the ideals in the intersection eventually yields an intersection of irreducible ideals, which is a primary decomposition of $I$. This is sometimes called the \emph{splitting algorithm.}

\begin{exm}\label{exm-splitting}
    $I=\langle xy^2,z^3 \rangle = (I + \langle x \rangle) \cap (I + \langle y^2 \rangle) = \langle x,z^3 \rangle \cap \langle y^2,z^3 \rangle$.
\end{exm}\medskip

\subsubsection{Alexander duality}

The theory of Alexander duality links minimal generators of a monomial ideal with its irreducible components. For vectors ${\bf a}=(a_1,\ldots,a_n)$ and ${\bf b}=(b_1,\ldots,b_n)$ in $\N_0^n$ with $b_i \leq a_i$ for $i=1,\ldots,n$, define the new vector ${\bf a\backslash b}$ whose $i$-th component is
\[
    a_i \backslash b_i := \begin{cases}
        a_1 + 1 - b_1 &\text{if $b_i > 0$}\\
        0 & \text{if $b_i=0$.}
    \end{cases}
\]
Suppose $I$ is a monomial ideal and each of its minimal generators divides the monomial $x_1^{a_1}\cdots x_n^{a_n}$. Then the Alexander dual of $I$ with respect to ${\bf a}$ is
\[
    I^{[{\bf a}]} := \bigcap \{ \langle x_1^{a_1 \backslash b_1},\ldots,x_n^{a_n\backslash b_n} \rangle : x_1^{b_1}\cdots x_n^{b_n} \text{ is a minimal generator of $I$}\}.
\]
It can be shown that $(I^{[{\bf a}]})^{[{\bf a}]}=I$, so that (see \cite[Theorem 5.27]{MiSt}),
\[
    I = \bigcap \{ \langle x_1^{a_1 \backslash b_1},\ldots,x_n^{a_n\backslash b_n} \rangle : x_1^{b_1}\cdots x_n^{b_n} \text{ is a minimal generator of $I^{[{\bf a}]}$}\},
\]
which is an irreducible, and hence a primary decomposition of $I$.

\begin{exm}\label{exm-alexanderduality}
    Let $I=\langle x^2y,yz^2 \rangle$ and ${\bf a}=(2,1,2)$. Then $I^{[\bf{a}]} = \langle x,y \rangle \cap \langle y,z \rangle = \langle xz, y \rangle$, so, using the same vector $\bf{a}$ once again, $I = (I^{[{\bf a}]})^{[{\bf a}]} =  \langle x^2,z^2 \rangle \cap \langle y \rangle$ is a primary decomposition of $I$.
\end{exm}\medskip

\subsubsection{Slice algorithm}

A \emph{maximal standard monomial} of $I$ is a monomial $m\not\in I$ such that $mx_i\in I$ for each $i=1,\ldots,n$. The set of maximal standard monomials of $I$ is denoted $\msm(I)$. It turns out (see \cite[Exercise 5.8]{MiSt}) that for a sufficiently large integer $t$, $\msm(I+\langle x_1^t,\ldots,x_n^t \rangle)$ is in bijection with the set of minimal irreducible factors of $I$ via the mapping
\[
    \prod_{i=1}^n x_i^{e_i} \mapsto \sum_{\substack{i=1 \\ e_i+1<t}}^n\langle x_i^{e_i+1}\rangle.
\]

\begin{exm}\label{exm-msm}
    Let $I=\langle x^2,xy \rangle$ and choose $t=3$ ($t$ must be larger than the maximum exponent appearing in the generators of $I$). Then
    \[
        \msm(I+\langle x^3,y^3 \rangle ) = \msm(\langle x^2, xy, y^3 \rangle) = \{x,y^2\},
    \]
    which gets mapped to the set $\{\langle x^2,y \rangle, \langle x \rangle \}$, and, indeed, $I=\langle x^2,y \rangle \cap \langle x \rangle$ is a minimal irreducible decomposition of $I$.
\end{exm}

The \emph{slice algorithm} computes the maximal standard monomials of an ideal by repeatedly ``slicing'' the ideal into smaller pieces, computing the maximal standard monomials of the slices, and reassembling; see \cite{Ro} for more detail.\medskip

\subsection{Quotient algorithm}

Let $I$ be a monomial ideal of $R$ with $G(I)=\{f_1,\ldots,f_s\}$. A primary decomposition of $I$ can be found by the following procedure (see \cite[Algorithm 3.28]{Va}), which we call the \emph{quotient algorithm}. Note that this is the immediate adaptation of the most important method of finding a primary decomposition of any ideal (see \cite[Procedure 3.5]{Sw}).
\begin{itemize}
    \item If there is a simple power of each $x_i$ in $I$, then $I$ is primary.
    \item If $I$ is not primary, let $x_j^d$ be the largest power of $x_j$ such that $x_j\mid f_\ell$ for some $1 \leq \ell \leq s$. Then
    $$I=(I,x_j^d)\cap (I:x_j^d).$$
    \item Repeat with $(I,x_j^d)$ and $(I:x_j^d)$ to get a primary decomposition of $I$.
\end{itemize}

\begin{exm}\label{exm-quotient}
    Let $I=\langle x^2y, xy^3,x^2z,yz^2 \rangle$. Starting with $x^2$, we have
    $$I=(I,x^2)\cap (I:x^2) = \langle x^2,xy^3,yz^2 \rangle \cap \langle y,z \rangle .$$
    Apply the algorithm again to the first ideal, using $y^3$:
    $$I=\langle x^2,y^3,yz^2\rangle \cap \langle x,z^2 \rangle \cap \langle y,z\rangle .$$
    Finally, apply again to the first ideal with $z^2$ to get a primary decomposition of $I$:
    $$I=\langle x^2,y^3,z^2\rangle \cap \langle x^2,y\rangle \cap \langle x,z^2 \rangle \cap \langle y,z\rangle.$$

    This algorithm produces a primary, not necessarily irreducible, decomposition of $I$. Notice that the process is recursive and involves choosing an ideal in the intersection to apply the procedure to next.
\end{exm}\medskip

\section{Main Result}

A finitely-generated monomial ideal in a polynomial ring determines an exponent matrix, which can be transformed, through multiplication by certain diagonal matrices and appending rows, into the exponent matrices of its associated primes. Throughout this section, we use the notation $[n] := \{1,\ldots,n\}$ and, for $m<n$, $[m,n] := \{m,m+1,\ldots,n\}$.

Let $R:=\mathbb K[x_1,\ldots,x_n]$ and let $I=\langle f_1,\ldots,f_k\rangle$ be a monomial ideal minimally generated by $f_1,\ldots,f_k$, where
\[
    f_i = x^{{\bf e}_i} := x_1^{e_{i_1}}\cdots x_n^{e_{i_n}},\quad {\bf e}_i = (e_{i_1},\ldots,e_{i_n}) \in \N_0^n,\quad i = 1,\ldots,k.
\]
Then we may define the $k\times n$ \emph{exponent matrix} of $I$, denoted $E(I)$, by
\[
    E(I) := \begin{bmatrix}
        e_{1_1} & \cdots & e_{1_n} \\
        \vdots & \ddots & \vdots \\
        e_{k_1} & \cdots & e_{k_n}
    \end{bmatrix},
\]
where each row is the exponent vector of a generator of $I$. As the minimal set of generators of a monomial ideal is unique, this matrix is unique up to row order.

It is clear that every $m\times n$ positive integer matrix defines (not uniquely) an ideal in $R$. After we introduce our main result (Theorem \ref{thm-exponent}), we combine it with the exponent matrix to compute the associated primes of a monomial ideal by matrix multiplication and appending rows (Algorithm \ref{alg-exp}).\medskip

The main result is the following modification of the quotient algorithm. It avoids the recursive quality of that algorithm by listing the primary components explicitly (including trivial or redundant ones, which may also appear in the quotient algorithm).

\begin{thm}\label{thm-exponent}
    Let $I$ be a monomial ideal of ${\mathbb K}[x_1,\ldots,x_n]$ with $G(I)=\{x^{{\bf e}_1},\ldots,x^{{\bf e}_k}\}$. Define the vector ${\bf m}=(m_1,\ldots,m_n)$ with entries $m_i:=\max\{e_{1_i},\ldots,e_{k_i}\}$, the largest power of $x_i$ occurring in a generator of $I$. For $S\subseteq [n]$, define ${\bf m}(S)$ entry-wise by
    \[
        {\bf m}(S)_i := \begin{cases}
            m_i &i\in S\\
            0 &i\not\in S.
        \end{cases}
    \]
    Finally, define
    \[
        I_{S}:= (I:x^{{\bf m}(S)})+\langle x_i^{m_i} : i \not\in S\rangle.
    \]
    Then
    \[
        I = \bigcap_{S \subsetneq [n]} I_{S},
    \]
    where each ideal in the intersection is either primary or is the unit ideal.
\end{thm}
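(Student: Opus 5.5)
The plan is to prove the two inclusions of $I=\bigcap_{S\subsetneq[n]}I_S$ separately, working only with monomials, and then to read the primary property off the generators of $I_S$. I will use two standard facts throughout. First, an intersection of monomial ideals is again a monomial ideal. Second, $(I:u)$ is a monomial ideal for any monomial $u$, generated by the monomials $x^{({\bf e}_j-{\bf u})_+}$, where $(\cdot)_+$ is the entrywise positive part. Hence membership of a monomial in $I_S$ means divisibility by one of the listed generators.

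The inclusion $I\subseteq I_S$ is immediate, since $I\subseteq (I:x^{{\bf m}(S)})$. For the reverse inclusion it suffices to take a monomial $x^{\bf a}$ lying in every $I_S$ and show $x^{\bf a}\in I$. I will choose the index set adapted to ${\bf a}$, namely $S:=\{i : a_i\ge m_i\}$.

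If $S=[n]$, then $x^{\bf m}\mid x^{\bf a}$. Every generator $x^{{\bf e}_j}$ divides $x^{\bf m}$ by the definition of ${\bf m}$, so $x^{\bf a}\in I$.

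Otherwise $S\subsetneq[n]$, so $x^{\bf a}\in I_S$ and $x^{\bf a}$ is divisible by a generator of $I_S$. It cannot be divisible by $x_i^{m_i}$ with $i\notin S$, because $a_i<m_i$ for such $i$. Hence $x^{\bf a}\in(I:x^{{\bf m}(S)})$, which means ${\bf e}_j\le {\bf a}+{\bf m}(S)$ entrywise for some $j$. I then compare coordinates:
\begin{itemize}
\item for $i\in S$, we have $e_{j_i}\le m_i\le a_i$;
\item for $i\notin S$, we have $e_{j_i}\le a_i+0=a_i$.
\end{itemize}
So ${\bf e}_j\le{\bf a}$, and therefore $x^{\bf a}\in I$. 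The key step is this choice of $S$ depending on ${\bf a}$; once it is made, the inclusion is a coordinatewise comparison.

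For the primary claim, I look at the generators of $(I:x^{{\bf m}(S)})$, which are $x^{({\bf e}_j-{\bf m}(S))_+}$. For $i\in S$ the exponent is $(e_{j_i}-m_i)_+=0$, since $e_{j_i}\le m_i$. Thus $I_S$ is generated by monomials in the variables $\{x_i : i\notin S\}$ alone. It also contains the pure power $x_i^{m_i}$ for each $i\notin S$. If $1\in I_S$, it is the unit ideal. Otherwise $I_S=J R$, where $J$ is a proper monomial ideal of $A:={\mathbb K}[x_i : i\notin S]$ containing a power of every variable of $A$. Such a $J$ is $\mathfrak n$-primary, where $\mathfrak n=\langle x_i : i\notin S\rangle$, as in the first step of the quotient algorithm.

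The one point requiring care is that primariness survives adjoining the free variables $x_i$, $i\in S$. I would argue as follows. We have $R/I_S\cong (A/J)\otimes_{\mathbb K}{\mathbb K}[x_i : i\in S]=(A/J)[x_i : i\in S]$. Here $A/J$ is Artinian local with nilpotent maximal ideal $\mathfrak n/J$. A zero divisor of a polynomial ring over $A/J$ is annihilated by a nonzero constant $c\in A/J$ (McCoy's theorem). Every coefficient of that zero divisor then lies in the set of zero divisors of $A/J$, which is $\mathfrak n/J$ since $A/J$ is Artinian local. So all its coefficients are nilpotent, and hence the element itself is nilpotent. Every zero divisor of $R/I_S$ is therefore nilpotent, so $I_S$ is primary, with radical $\langle x_i:i\notin S\rangle R$. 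This McCoy step is where I expect the only real obstacle, and I would present it as a short lemma.
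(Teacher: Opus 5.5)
Your proof is correct and follows the paper's argument: for the reverse inclusion you use the same choice $S=\{i : a_i\ge m_i\}$ (the paper argues contrapositively, showing a monomial outside $I$ is not in $I_S$ for this $S$), and for the primary claim you make the same observation that $I_S$ is generated by monomials in $\{x_i : i\notin S\}$ together with the pure powers $x_i^{m_i}$. Your McCoy-theorem lemma fills in a step the paper takes for granted, namely that such an ideal $JR$ is still primary after adjoining the free variables $x_i$, $i\in S$.
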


In particular, the theorem allows us to find a minimal primary decomposition by removing redundant ideals from the intersection. Before we proceed with a proof, however, we give an example.

\begin{exm}\label{exm-exponent}
    Let $I=\langle x^2y,y^2z\rangle$ be an ideal of $R:=\mathbb{K}[x,y,z]$, so that ${\bf m} := (2,2,1)$. We compute the following:

    \begin{center}
    {\setstretch{1.5}
        \begin{tabular}{lll}
            $S$ & ${\bf m}(S)$ & $I_{S}$ \\
            \hline
            $\emptyset$ & $(0,0,0)$ & $(I:1)+\langle x^2,y^2,z \rangle = \langle x^2,y^2,z \rangle$ \\
            $\{1\}$     & $(2,0,0)$ & $(I:x^2)+\langle y^2,z \rangle = \langle y,z \rangle$ \\
            $\{2\}$     & $(0,2,0)$ & $(I:y^2)+\langle x^2,z \rangle = \langle x^2,z \rangle$ \\
            $\{3\}$     & $(0,0,1)$ & $(I:z)+\langle x^2,y^2 \rangle = \langle x^2,y^2 \rangle$ \\
            $\{1,2\}$   & $(2,2,0)$ & $(I:x^2y^2)+\langle z \rangle = R$ \\
            $\{1,3\}$   & $(2,0,1)$ & $(I:x^2z)+\langle y^2 \rangle = \langle y \rangle$ \\
            $\{2,3\}$   & $(0,2,1)$ & $(I:y^2z)+\langle x^2 \rangle = R$ \\
        \end{tabular}
    }
    \end{center}\medskip
    
    Clearly each ideal in the table is either primary or the unit ideal (the entire ring). The theorem says that
    \[
        I = R \cap \langle y \rangle \cap R \cap \langle x^2,y^2\rangle \cap \langle x^2,z \rangle \cap \langle y,z \rangle \cap \langle x^2,y^2,z \rangle,
    \]
    but as $\langle y \rangle \subseteq \langle y,z \rangle$ and $\langle x^2,y^2 \rangle \subseteq \langle x^2,y^2,z \rangle$, we simplify to
    \[
        I = \langle y \rangle \cap \langle x^2,y^2 \rangle \cap \langle x^2,z \rangle.
    \]
    This is, in fact, a (minimal) primary decomposition of $I$.
\end{exm}\medskip

Now we prove the theorem.

\begin{proof}
    Define
    \[
        J := \bigcap_{S \subsetneq [n]} I_{S}.
    \]
    Our goal is to show that $I=J$ and that each ideal in the intersection is primary or equal to $R$.

    Clearly, since $I\subseteq (I:f)$ for any monomial $f$, we have $I\subseteq J$.
    
    For the other inclusion, we consider $x_1^{c_1}\cdots x_n^{c_n}\not\in I$ and show that $x_1^{c_1}\cdots x_n^{c_n}\notin J$. For at least one $i\in [n]$ we must have $c_i < m_i$, otherwise $x_1^{c_1}\cdots x_n^{c_n}$ is divisible by a generator of $I$. If $c_i < m_i$ for all $i\in [n]$, then $x_1^{c_1}\cdots x_n^{c_n}$ is neither divisible by any generator of $I$ nor divisible by any of $x_1^{m_1},\ldots,x_n^{m_n}$, so, as $I=(I:1)$, we have $x_1^{c_1}\cdots x_n^{c_n}\not\in (I:1)+\langle x_1^{m_1},\ldots,x_n^{m_n}\rangle =: I_{\emptyset} \supset J$ and we are done.
    
    Assume $c_j\geq m_j$ for some $j$. By reordering the variables, we may assume without loss of generality that $c_i<m_i$ for $i=1,\ldots,t$ and $c_j \geq m_j$ for $j=t+1,\ldots,n$. We claim that
    \[
        x_1^{c_1}\cdots x_n^{c_n}\not\in (I:x_{t+1}^{m_{t+1}}\cdots x_n^{m_n})+\langle x_1^{m_1},\ldots,x_t^{m_t}\rangle =: I_{[t+1,n]} \supset J.
    \]
    Indeed, $x_1^{c_1}\cdots x_n^{c_n}x_{t+1}^{m_{t+1}}\cdots x_n^{m_n}$ is not in $I$ as the exponents $c_{t+1},\ldots,c_n$ are already as large as $m_{t+1},\ldots,m_n$, respectively, so increasing these exponents does not affect divisibility by a generator of $I$. Additionally, $x_1^{m_1},\ldots,x_t^{m_t} \nmid x_1^{c_1}\cdots x_n^{c_n}$ since $m_i > c_i$ for $i=1,\ldots,t$. Hence $x_1^{c_1}\cdots x_n^{c_n}\not\in J$, so $I\supset J$, and therefore $I=J$. (Note that this argument shows that not all ideals in the intersection can be the unit ideal, otherwise no choice of monomial outside of $I$ exists!)

    It remains to show that each ideal in the intersection is primary or the unit ideal. If any generator of $I$ divides the monomial $x^{{\bf m}(S)}$, then $(I:x^{{\bf m}(S)})$ is the unit ideal so $I_{S}$ is the unit ideal. If $I_{S}$ is not the unit ideal, then $I_{S}$ is generated by simple powers of the variables $x_i$ for $i\not\in S$ and monomials in these variables, so $I_{S}$ is primary.
\end{proof}\bigskip

Now we return to the exponent matrix, which we use to streamline the computation of the ideals in the intersection given by Theorem \ref{thm-exponent}. Observe that in that intersection, a typical ideal is of the form
\[
    I_S = (I:x^{{\bf m}(S)}) + \langle x_i^{m_i} : i\not\in S \rangle,
\]
and the generators of this ideal are formed by first stripping each generator of $I$ of the variables $x_i$ for $i\in S$, then adding the monomials $x_i^{m_i}$ for $i\not\in S$ and reducing to a minimal generating set. This process is easily performed in terms of the exponent matrix, which we demonstrate with an example.

\begin{exm}\label{exm-exponentmatrix}
    Take $I=\langle x^2y,y^2z \rangle$, the same ideal as in Example \ref{exm-quotient}. Then
    \[
        E(I) = \begin{bmatrix}
            2 & 1 & 0 \\ 0 & 2 & 1
        \end{bmatrix}.
    \]
    To compute the exponent matrix of the ideal in the intersection corresponding to $S=\{2,3\}$, we strip the generators of $I$ of the variables $y$ and $z$ and add the generator $x^2$. In terms of the exponent matrix, this is accomplished by multiplying $E(I)$ by the matrix
    \[
        D_{(1,0,0)} := \diag\{(1,0,0)\} = \begin{bmatrix}
            1 & 0 & 0 \\ 0 & 0 & 0 \\ 0 & 0 & 0
        \end{bmatrix}
    \]
    and appending the row vector $\begin{bmatrix}2 & 0 & 0\end{bmatrix}$. So the exponent matrix of $I_{\{2,3\}}$ is
    \[
        E(I_{\{2,3\}}) = \begin{bNiceMatrix}[columns-width = 10pt]
            \Block{1-3}{E(I) D_{(1,0,0)}}  & & \\
            \quad 2 & 0 & 0 \quad
        \end{bNiceMatrix} = \begin{bmatrix}
            2 & 0 & 0 \\ 0 & 0 & 0 \\ 2 & 0 & 0
        \end{bmatrix},
    \]
    which corresponds to the ideal
    \[
        I_{\{2,3\}} = \langle x^2,1,x^2 \rangle = R,
    \]
    as we saw before.

    Similarly, we compute
    \[
        E(I_{\{1,3\}}) = \begin{bNiceMatrix}[columns-width = 10pt]
            \Block{1-3}{E(I) D_{(0,1,0)}} & & \\
            \quad 0 & 2 & 0 \quad
        \end{bNiceMatrix} = \begin{bmatrix}
            0 & 1 & 0 \\ 0 & 2 & 0 \\ 0 & 2 & 0
        \end{bmatrix},
    \]
    which corresponds to the ideal
    \[
        I_{\{1,3\}} = \langle y,y^2,y^2 \rangle = \langle y \rangle,
    \]
    as before.

    Notice that for some computations we need to append more than one row. For example,
    \[
        E(I_{\{3\}}) = \begin{bNiceMatrix}[columns-width = 10pt]
            \Block{1-3}{E(I) D_{(1,1,0)}} & & \\
            \quad 2 & 0 & 0 \quad \\
            \quad 0 & 2 & 0 \quad
        \end{bNiceMatrix} = \begin{bmatrix}
            2 & 1 & 0 \\ 0 & 2 & 0 \\ 2 & 0 & 0 \\ 0 & 2 & 0
        \end{bmatrix},
    \]
    which gives the ideal
    \[
        I_{\{3\}} = \langle x^2y, y^2, x^2, y^2 \rangle = \langle x^2, y^2 \rangle.
    \]

    Repeating for all possible nonzero diagonal matrices with zeros and ones on the main diagonal gives the same ideals from the intersection in the theorem.
\end{exm}\bigskip

We can now reframe Theorem \ref{thm-exponent} as an algorithm for producing a primary decomposition in terms of exponent matrices.

\begin{alg}\label{alg-exp}
    Let $E(I)$ be the exponent matrix of a monomial ideal $I\subseteq R:=\mathbb{K}[x_1,\ldots,x_n]$ and let $m_i$ denote the largest entry in the $i$-th column for $i=1,\ldots,n$. Form the diagonal matrix
    \[
        \begin{bmatrix}
            {\bf m}_1 \\ \vdots \\ {\bf m}_n
        \end{bmatrix} := \diag(m_1,\ldots,m_n).
    \]
    
    For each proper subset $S\subsetneq [n]$, do the following:
    \begin{enumerate}
        \item Define the vector $\1(S)$ component-wise by
        \[
            \1(S)_i = \begin{cases}
                0 & i\in S\\
                1 & i\not\in S
            \end{cases}
        \]
        and form the diagonal matrix $D_{\1(S)}:= \diag\{\1(S)\}$.
        \item Compute $E(I)D_{\1(S)}$.
        \item For each element $j\in [n]\setminus S$, append the row ${\bf m}_j$ to the matrix from step 2. Denote the resulting matrix by $E(I_S)$.
    \end{enumerate}
    The $2^n-1$ matrices $E(I_S)$ produced by repeating these steps for each proper subset $S$ are the exponent matrices of primary or unit ideals whose intersection is equal to $I$. Reducing this intersection gives a primary decomposition of $I$.
\end{alg}

Note that our definition of $\1(S)$ above is essentially the opposite of our definition of ${\bf m}$ from Theorem \ref{thm-exponent}. This is because taking the quotient by the highest power of a variable is equivalent to zeroing out the corresponding column of the exponent matrix.

We conclude with an example, and note that a Macaulay2 implementation of Algorithm \ref{alg-exp} is included in the appendix.

\begin{exm}\label{exm-algorithm}
    This example illustrates that not every decomposition obtained by this algorithm is irreducible, as was the case in Example \ref{exm-exponent}.

    Let $I=\langle x^2y, x^3z^2,y^2z \rangle$ with
    \[
        E(I) = \begin{bmatrix}
            2 & 1 & 0 \\ 3 & 0 & 2 \\ 0 & 2 & 1
        \end{bmatrix},\quad\text{and}\quad \begin{bmatrix}
            {\bf m}_1 \\ \vdots \\ {\bf m}_n
        \end{bmatrix} = \begin{bmatrix}
            3 & & \\ & 2 & \\ & & 2
        \end{bmatrix}.
    \]

    Then the algorithm produces the following:\medskip
    
    {\small\begin{center}
        \begin{tabular}{c | c c c c c c c}
            $S$
            & $\emptyset$ 
            & $\{1\}$ 
            & $\{2\}$ 
            & $\{3\}$ 
            & $\{1,2\}$ 
            & $\{1,3\}$ 
            & $\{2,3\}$ \\[1.25em]
            
            $D_{\1(S)}$
            & $\bmat{1&&\\&1&\\&&1}$
            & $\bmat{0&&\\&1&\\&&1}$
            & $\bmat{1&&\\&0&\\&&1}$
            & $\bmat{1&&\\&1&\\&&0}$
            & $\bmat{0&&\\&0&\\&&1}$
            & $\bmat{0&&\\&1&\\&&0}$
            & $\bmat{1&&\\&0&\\&&0}$ \\[2.25em]
        
            $E(I_S)$
            & $\bmat{2&1&0\\ 3&0&2\\ 0&2&1\\ 3&0&0\\ 0&2&0\\ 0&0&2}$
            & $\bmat{0&1&0\\ 0&0&2\\ 0&2&1\\ 0&2&0\\ 0&0&2}$
            & $\bmat{2&0&0\\ 3&0&2\\ 0&0&1\\ 3&0&0\\ 0&0&2}$
            & $\bmat{2&1&0\\ 3&0&0\\ 0&2&0\\ 3&0&0\\ 0&2&0}$
            & $\bmat{0&0&0\\ 0&0&2\\ 0&0&1\\ 0&0&2}$
            & $\bmat{0&1&0\\ 0&0&0\\ 0&2&0\\ 0&2&0}$
            & $\bmat{2&0&0\\ 3&0&0\\ 0&0&0\\ 3&0&0}$ \\[4.25em]
        
            ideal
            & $\langle x^3,x^2y,y^2,z^2 \rangle$
            & $\langle y,z^2 \rangle$
            & $\langle x^2,z \rangle$
            & $\langle x^3,x^2y,y^2 \rangle$
            & $R$
            & $R$
            & $R$
        \end{tabular}
    \end{center}}\bigskip
    
    So a primary (although neither irreducible nor irredundant) decomposition of $I$ is
    \[
        I = \langle x^3,x^2y,y^2,z^2 \rangle \cap \langle y,z^2 \rangle \cap \langle x^2,z \rangle \cap \langle x^3,x^2y,y^2 \rangle.
    \]
    Since $\langle x^3,x^2y,y^2 \rangle \subseteq \langle x^3,x^2y,y^2,z^2 \rangle$, we can reduce to a minimal primary decomposition
    \[
        I = \langle y,z^2 \rangle \cap \langle x^2,z \rangle \cap \langle x^3,x^2y,y^2 \rangle.
    \]
\end{exm}

\vskip.2in

\noindent {\bf Acknowledgment:} I would like to thank my PhD advisor, Dr. \c{S}tefan Toh\v{a}neanu, for his guidance throughout this project. I am very grateful to Dr. Mehrdad Nasernejad for all the comments, corrections, and great suggestions on the early stages of the manuscript. All computations were performed using Macaulay2, a (free) software system for research in algebraic geometry (\cite{GrSt}). \medskip

\noindent\textbf{AI Declaration:} Except for Google Gemini which was used to correct and optimize the lines of the Macaulay2 code presented in the Appendix, the author declares that no generative artificial intelligence (GAI) tools were used anywhere else in the preparation, writing, analysis, or publication of this manuscript.\bigskip


\renewcommand{\baselinestretch}{1.0}
\small\normalsize 

\bibliographystyle{amsalpha}
\newpage

\section{Appendix: A Macaulay2 implementation}

The function \verb|computeIntersection| below implements Algorithm \ref{alg-exp} in Macaulay2, and reduces the resulting list of ideals by eliminating redundancies.\bigskip

{\footnotesize\begin{verbatim}
-- exponentMatrx: given an ideal I, return its exponent matrix E.
exponentMatrix = I -> matrix flatten apply(flatten entries mingens I, exponents)

-- maxColumnEntries: given an mxn matrix A, return an nxn diagonal matrix B whose
--                   (i,i) entry is the maximum entry in the ith column of A.
maxColumnEntries = A -> diagonalMatrix apply(entries transpose A, max)

-- properSubsets: given an integer n, return all proper subsets of {0,...,n-1}.
properSubsets = n -> drop(subsets(n),-1)

-- oneS: after defining an integer n and a subset S of {0,...,n-1}, create a diagonal
--       matrix whose (i,i) entry is 0 if i is in S and 1 otherwise.
oneS = (S,n) -> diagonalMatrix(
    toList apply(0..n-1, i -> if isMember(i,S) then 0 else 1)
)

-- matrixToIdeal: given an exponent matrix, produce and trim a monomial matrix.
matrixToIdeal = (M,R) -> monomialIdeal(apply(entries M, v -> R_v))

-- irredundantIdeals: given a list L of ideals, reduce it to a minimal intersection.
irredundantIdeals = L -> (select(L, J -> not any(L, I -> I != J and isSubset(I, J))))

-- computeIntersection: given a monomial ideal I in a polynomial ring R, compute the
                        list of 2^n-1 primary or unit ideals which interesect to give
                        back I, then eliminate redundant ideals.
computeIntersection = (I,R) -> (
    E := exponentMatrix I;
    M := maxColumnEntries E;
    n := numcols M;
    L := toList(0..(n-1));
    S := properSubsets n;
    D := apply(S, s -> E*oneS(s,n) || M^(toList(set L - set s)));
    Q := apply(D, d -> matrixToIdeal(d,R));
    irredundantIdeals Q
)
\end{verbatim}}

\end{document}